\newtheorem{theorem}{Theorem}[section]
\newtheorem{lemma}[theorem]{Lemma}
\theoremstyle{definition}
\newtheorem{definition}{Definition}
\theoremstyle{remark}
\renewcommand{\phi}{\varphi}
\newcommand{\e}{\varepsilon}
\newcommand{\R}{\mathbb{R}}
\renewcommand{\S}{\mathbb{S}}
\newcommand{\close}[1]{\overline{#1}}
\renewcommand{\u}{\mathbf{u}}
\renewcommand{\v}{\mathbf{v}}
\newcommand{\z}{\mathbf{z}}
\newcommand{\w}{\mathbf{w}}
\newcommand{\Om}{\Omega}
\renewcommand{\S}{\mathbf{S}}
\title{Viscosity Solutions of Balanced Quasi-Monotone Fully Nonlinear Weakly Coupled Systems}
\author{Andreas Minne\\
Martin H. Str\"omqvist}
\begin{document}
\maketitle

\begin{abstract}
We introduce so called balanced quasi-monotone systems. These are systems $F(x,r,p,X)=(F_1(x,r,p,X),\ldots,F_m(x,r,p,X))$, where 
$x$ belongs to a domain $\Om$, $r=u(x)\in\R^m$, $p=Du(x)$ and $X=D^2u(x)$, that can be arranged into two categories that are mutually 
competitive but internally cooperative. More precisely, for all $i\neq j$ in the set $\{1,2,\ldots,m\}$, $F_j$ is monotone non-decreasing 
(non-increasing) in $r_i$ if and only if $F_i$ is monotone non-decreasing (non-increasing) in $r_j$ and $F_j$ is a monotone function in 
$r_i$. We prove the existence and uniqueness of viscosity solutions to systems of this type. For uniqueness 
we need to require that $F_j$ is monotone increasing in $r_j$, at an at least linear rate. 
This should be compared to the quasi-monotone systems studied by Ishii and Koike in \cite{MR1116855}, where it is assumed that 
$F(x,r,p,X)\ge F(x,s,p,X)$ if $r\le s$. 
\end{abstract}

\section{Introduction} 

While the theory for viscosity solutions of scalar equations is highly developed \cite{MR1462699}, \cite{MR1118699}, 
much remains to be done for systems of equations. The reason for this is that the theory of viscosity solutions in its very definition is depending on the maximum principle which in general does not hold for elliptic systems. Results in the variational and
potential theoretic setting have no counterpart in the fully nonlinear case, where viscosity methods are used, unless some restrictions on the operators are done to impose a maximum principle. This has been done in \cite{MR1116855} through the assumption of a quasi-monotonicity condition (see Section \ref{sec:prob}). In this paper we introduce so called super-sub and sub-super solutions and define solutions as functions that 
are both super-sub and sub-super solutions. This enables us to extend the theory in \cite{MR1116855} to encompass a larger class of equations.   
To show existence, we use Perron's method while uniqueness follows along the same lines as in \cite{MR1116855}. With this result, we can cope with, for instance, competitive systems that does not seem to have been covered before.

\section{Problem Setting and Assumptions}\label{sec:prob}
The systems we shall deal with are of the type 
\begin{equation}\label{sys}
\left\{
\begin{aligned}
&F_1(x,\u(x),Du_1(x),D^2u_1(x))=0\\
&F_2(x,\u(x),Du_2(x),D^2u_2(x))=0\\
&\vdots\\
&F_m(x,\u(x),Du_m(x),D^2u_m(x))=0,
\end{aligned}\right.
\end{equation}
where $x$ belongs to an open bounded subset $\Omega$ of $\R^n$ and $\u(x)=(u_1(x),\ldots,u_m(x))$. 
Systems of type \eqref{sys} are called weakly coupled since each $F_j$ depends only on the pointwise 
values of $u_k$ for $k\neq j$ and not on the derivatives of $u_k$. Each $F_j$ is assumed to be a continuous function 
$\Omega\times\R^m\times\R^n\times\S^n\mapsto\R$, where $\S^n$ denotes the space of all symmetric $n$ times $n$ matrices. However, this continuity assumption on $F$ could be relaxed by considering the semicontinuous envelopes of $F$ (see Definition \ref{def:semi}).
To our knowledge only systems of this type have been treated with viscosity methods in any greater generality thus far. 
A typical assumption on \eqref{sys} is degenerate ellipticity:

For each $j\in\{1,\ldots,m\}$ and all $(x,r,p)\in\Omega\times\R^m\times\R^n$, 
\[F_j(x,r,p,X)\ge F_j(x,r,p,Y),\]
whenever $X\in\S^n$, $Y\in \S^n$ and $X\le Y$. 

We will subdivide the $m$ equations into two categories. The equations within each category will be mutually cooperative, while the two categories will be competitive. Let $1\le m_1\le m$ be a an integer. We write any vector $\xi\in \R^m$ as $\xi = (r,s)$, where 
$r= (r_1,\ldots,r_{m_1})$ and $s = (s_{1},\ldots,s_{m_2})$, $m_1+m_2=m$. For a vector valued function $\u=(\u_1,\u_2):\Omega\times\Omega\to\R^{m_1}\times R^{m_2}$ we will use the notation $(\u_1,\u_2)=(u_{11},u_{12}\ldots,u_{1m_1},u_{21},u_{22},\ldots,u_{2m_2})$. The following condition will be referred to as \emph{balanced quasi-monotonicity}:
\begin{align}\label{mon1}
&\left\{\begin{aligned}
&F_j(x,r,s,p,X)\le F_j(x,r,\sigma,p,X),\quad\text{whenever }s\le \sigma,\; \text{ and }1\le j\le m_1,\\
&F_j(x,r,s,p,X)\ge F_j(x,\rho,s,p,X),\quad\text{whenever }r\le \rho,\;r_j=\rho_j \text{ and }1\le j\le m_1,\\
&\text{for all }(x,p,X)\in \Omega\times\R^n\times\S^n, \\
\end{aligned}\right.\\
&\left\{\begin{aligned}\label{mon2}
&F_{m_1+j}(x,r,s,p,X)\le F_{m_1+j}(x,\rho,s,p,X),\quad\text{whenever }r\le \rho,\; \text{ and }1\le j\le m_2,\\
&F_{m_1+j}(x,r,s,p,X)\ge F_{m_1+j}(x,r,\sigma,p,X),\quad\text{whenever }s\le \sigma,\;s_j=\sigma_j \text{ and }1\le j\le m_2,\\
&\text{for all }(x,p,X)\in \Omega\times\R^n\times\S^n.
\end{aligned}\right.
\end{align} 
For example, the system 
\begin{equation}\label{competitive}
\left\{\begin{aligned}
&-\Delta u+\lambda u+\alpha\max(u,v)-f=0\quad \text{in }\Om,\\
& -\Delta v+\lambda v+\beta\max(u,v)-g=0\quad \text{in }\Om, \\
\end{aligned}\right.
\end{equation}
where $\alpha$, $\beta$ and $\lambda$ are positive constants, is included in this setting with 
$m_1=m_2=1$. We will return to this system at the end of Section \ref{last}. 

The conditions \eqref{mon1}-\eqref{mon2} should be compared to the \emph{quasi-monotone} criterium in \cite{MR1116855},
\begin{equation}\label{monorig}\begin{aligned}
&F_j(x,\eta,p,X)\le F_j(x,\xi,p,X),\quad\text{whenever }\xi\le \eta,\; \xi_j=\eta_j,\; \xi\in\R^m, \eta\in\R^m,\\
&\text{for all }j\in\{1,\ldots,m\}\text{ and }(x,p,X)\in \Omega\times\R^n\times\S^n. 
\end{aligned}
\end{equation}
Note that \eqref{mon2} becomes void if $m_1=m$ and that \eqref{monorig} then coincides with \eqref{mon1}. 

%
 
Before giving the definition of a viscosity solution we recall the semicontinuous envelopes of a function, 
and the notion of touching from above and below.  
\begin{definition} \label{def:semi}
Let $f$ be a function $\Omega\mapsto\R$. 
The upper (resp. lower) semicontinuous envelope of $f$ is given by 
\[
f^*(x) = \limsup_{y\to x}f(x),\qquad (f_*(x) = \liminf_{y\to x}f(x)). 
\]
\end{definition}
For a vector valued function $\mathbf{f}:\Om\to\R^k$ we write $\mathbf{f}^*$ to denote $(f_{1}^*,\ldots, f_k^*)$ and similarly $\mathbf{f}_*$ 
for $(f_{1*},\ldots, f_{k*})$.
\begin{definition}
We shall say a function $\phi:\Omega\to\R$ touches $f$ from above at $x\in\Omega$ if $\phi(x)=f(x)$ 
and $\phi>f$ in $\mathcal{N}\setminus \{x\}$ for some open neighborhood $\mathcal{N}$ of $x$.
Similarly, $\phi$ is said to touch $f$ from below at $x$ if $\phi(x)=f(x)$
and $\phi<f$ in $\mathcal{N}\setminus \{x\}$ for some open neighborhood $\mathcal{N}$ of $x$.   
\end{definition}

Henceforth we assume that the system \eqref{sys} is degenerate elliptic and satisfies the condition of balanced monotonicity \eqref{mon1}-\eqref{mon2}. Viscosity solutions will be referred to simply as solutions. 
\begin{definition}[Super-sub Solution]
A bounded function $(\u_1,\u_2):\Omega\times\Om\to \R^{m_1}\times\R^{m_2}$ is said to be a \emph{super-sub solution} of \eqref{sys} 
if $\u_*^{\:\:*} = (\u_{1*},\u_2^{*})$ satisfies, for each $x\in\Omega$, 
\[
F_j(x,\u_*^{\:\:*}(x),D\phi(x),D^2\phi(x))\ge 0, \qquad\text{for all }1\le j\le m_1,
\]
whenever $\phi\in C^2(\Omega)$ touches $u_{1j*}$ from below at $x$ and 
\[
F_{m_1+j}(x,\u_*^{\:\:*}(x),D\phi(x),D^2\phi(x))\le 0, \qquad\text{for all }1\le j\le m_2,
\]
whenever $\phi\in C^2(\Omega)$ touches $u_{2j}^{*}$ from above at $x$. 
We shall also write 
\begin{align*}
&F_j(x,\u(x),Du_{1j}(x),D^2u_{1j}(x))\ge 0,\quad 1\le j\le m_1,\\
&F_{m_1+j}(x,\u(x),Du_{2j}(x),D^2u_{2j}(x))\le 0,\quad 1\le j\le m_2,
\end{align*}
when the above conditions are met. 
\end{definition}

\begin{definition}[Sub-super Solution]
A bounded function $(\u_1,\u_2):\Omega\times\Om\to \R^{m_1}\times\R^{m_2}$ is said to be a \emph{sub-super solution} of \eqref{sys} 
if $\u^*_{\:\:*} = (\u_1^{*},\u_{2*})$ satisfies, for each $x\in\Omega$,  
\[
F_j(x,\u^*_{\:\:*}(x),D\phi(x),D^2\phi(x))\le 0, \qquad\text{for all }1\le j\le m_1,
\]
whenever $\phi\in C^2(\Omega)$ touches $u_{1j}^{*}$ from above at $x$ 
and 
\[
F_{m_1+j}(x,\u^*_{\:\:*}(x),D\phi(x),D^2\phi(x))\ge 0, \qquad\text{for all }1\le j\le m_2,
\]
whenever $\phi\in C^2(\Omega)$ touches $u_{2j*}$ from below at $x$. 
We shall also write 
\begin{align*}
&F_j(x,\u(x),Du_{1j}(x),D^2u_{1j}(x))\le 0,\quad 1\le j\le m_1,\\
&F_{m_1+j}(x,\u(x),Du_{2j}(x),D^2u_{2j}(x))\ge 0,\quad 1\le j\le m_2,
\end{align*}
when the above conditions are met. 
\end{definition}
\begin{definition}[Solution]\label{def_sol}
A bounded function $(\u_1,\u_2):\Omega\times\Om\to \R^{m_1}\times\R^{m_2}$ is said to be a \emph{solution} of \eqref{sys} 
if it is both a super-sub solution and a sub-super solution. 
\end{definition}

This should be compared to the definition of solution from \cite{MR1116855} given below. A subsolution (supersolution) according to \cite{MR1116855} can be thought of as a sub-sub solution (super-super solution) in our setting. 
\begin{definition}[Viscosity Subsolution \cite{MR1116855}]
A bounded function $\u:\Omega\to \R^m$ is said to be a viscosity subsolution of \eqref{sys} 
if $\u^* = (u_1^*,\ldots,u_m^*)$ satisfies, for each $j\in\{1,\ldots,m\}$ and $x\in\Omega$, 
\[
F_j(x,\u^*(x),D\phi(x),D^2\phi(x))\le 0, 
\]
whenever $\phi\in C^2(\Omega)$ touches $u_j^*$ from above at $x$.   
\end{definition}
\begin{definition}[Viscosity Supersolution \cite{MR1116855}]
A bounded function $\u:\Omega\to \R^m$ is said to be a viscosity supersolution of \eqref{sys} 
if $\u_* = (u_{1*},\ldots,u_{m*})$ satisfies, for each $j\in\{1,\ldots,m\}$ and $x\in\Omega$,  
\[
F_j(x,\u_*(x),D\phi(x),D^2\phi(x))\ge 0, 
\]
whenever $\phi\in C^2(\Omega)$ touches $u_{j*}$ from below at $x$. 
\end{definition}
\begin{definition}[Viscosity Solution \cite{MR1116855}]\label{def_sol_IK}
A continuous function $\u:\Omega\to \R^m$ is said to be a viscosity solution of \eqref{sys}
if it is both a viscosity subsolution and a viscosity supersolution. 
\end{definition}
Under the assumptions of degenerate ellipticity, quasi-monotonicity and the existsence of a subsolution 
$\mathbf{f}$ and a supersolution $\mathbf{g}$ 
such that $\mathbf{f}\le \mathbf{g}$ in $\close{\Omega}$, 
Perron's method is used in \cite{MR1116855} to prove the existence of a solution of \eqref{sys}. The structural assumption of 
quasi-monotonicity comes from the fact that the maximum (minimum) of two subsolutions (supersolutions) needs 
to be a subsolution (supersolution). Here the maximum of two functions $\u$ and $\v:\R^n\mapsto\R^m$ is defined as 
$\mathbf{w}(x) = (w_1(x),\ldots,w_m(x))$, where $w_j(x) = \max\{u_j(x),v_j(x)\}$ and analogously for the minimum. 

Although our definition of solution is based on sub-super- and super-sub solutions, as opposed to pure sub- and supersolutions in \cite{MR1116855}, the definitions of solution \ref{def_sol} and \ref{def_sol_IK} are equivalent. The major difference is that, 
if $(\u_1,\u_2)$ and $(\v_1,\v_2)$ are both super-sub solutions, then so is 
$\mathbf{w}$ given by 
\[
(\w_1(x),\w_2(x)) = (\min\{\u_1(x),\v_1(x)\},\max\{\u_2(x),\v_2(x)\}),
\] 
under the assumption of balanced quasi-monotonicity \eqref{mon1}-\eqref{mon2}. An analogous statement holds for sub-super solutions, i.e.
\[
(\w_1(x),\w_2(x)) = (\max\{\u_1(x),\v_1(x)\},\min\{\u_2(x),\v_2(x)\}).
\] 
This is the key point that allows us to adapt the theory developed in \cite{MR1116855} to balanced quasi-monotone systems. 


\section{Existence}

We use the Perron method. The first step towards existence is the following lemma: 
\begin{lemma}\label{infsup}
Let $S$ be any non-empty bounded set of super-sub solutions of \eqref{sys}. 
Let $u_{1j}(x) = \inf_S \{v_{1j}(x):\v = (\v_1,\v_2)\in S\}$, $j=1,\ldots,m_1$, $u_{2j}(x) = \sup_S \{v_{2j}(x):\v=(\v_1,\v_2)\in S\}$, 
$j=1,\ldots,m_2$. If $u_{1j*}(x)>-\infty$ for $1\le j\le m_1$ and $u_{2j}^*(x)<\infty$ for $1\le j\le m_2$, then $\u = (\u_1,\u_2)$ is a super-sub solution of \eqref{sys}. 
\end{lemma}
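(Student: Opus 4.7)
The plan is to adapt the classical Perron argument --- infima of supersolutions are supersolutions, and dually for suprema of subsolutions --- with the extra step of invoking balanced quasi-monotonicity at the end to replace the limiting values of the functions $\v^{(k)} \in S$ by the semicontinuous envelopes of $\u$.

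First I fix $1 \le j \le m_1$ and a $\phi \in C^2(\Omega)$ touching $u_{1j*}$ from below at some $x_0 \in \Omega$; the goal is $F_j(x_0, \u_{1*}(x_0), \u_2^*(x_0), D\phi(x_0), D^2\phi(x_0)) \ge 0$. Using $u_{1j*}(x_0) = \liminf_{y\to x_0}\inf_{\v \in S} v_{1j}(y)$, I choose sequences $y_k \to x_0$ and $\v^{(k)} \in S$ with $v^{(k)}_{1j}(y_k) \to \phi(x_0)$. The pointwise inequality $u_{1j} \le v^{(k)}_{1j}$ passes to envelopes as $u_{1j*} \le v^{(k)}_{1j*}$, so on a small closed ball $\close{B}(x_0, r) \subset \Omega$ the strict touching from below gives $\phi - v^{(k)}_{1j*} \le -\delta < 0$ on $\partial B(x_0, r)$ for some $\delta$ independent of $k$, while $\phi(y_k) - v^{(k)}_{1j*}(y_k) \to 0$. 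The upper semicontinuous function $\phi - v^{(k)}_{1j*}$ therefore attains its maximum $M_k$ over $\close{B}(x_0, r)$ at some interior point $z_k$, with $M_k \to 0$. A standard perturbation $\phi - M_k - |x - z_k|^4$ touches $v^{(k)}_{1j*}$ from below at $z_k$ with the same first- and second-order derivatives as $\phi$, so the super-sub solution property of $\v^{(k)}$ produces
\[
F_j(z_k, \v^{(k)}_{1*}(z_k), \v^{(k)*}_2(z_k), D\phi(z_k), D^2\phi(z_k)) \ge 0.
\]
Boundedness of $S$ lets me extract a subsequence along which $(\v^{(k)}_{1*}(z_k), \v^{(k)*}_2(z_k)) \to (a, b)$; a diagonal argument shrinking $r$ forces $z_k \to x_0$, and continuity of $F_j$ then yields $F_j(x_0, a, b, D\phi(x_0), D^2\phi(x_0)) \ge 0$.

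Next I pin down $(a, b)$ in relation to $\u_{1*}(x_0)$ and $\u_2^*(x_0)$. From $v^{(k)}_{1j*}(z_k) = \phi(z_k) - M_k$ I get $a_j = u_{1j*}(x_0)$; for $i \ne j$ in $\{1,\ldots,m_1\}$, combining $v^{(k)}_{1i*} \ge u_{1i*}$ with lower semicontinuity of $u_{1i*}$ yields $a_i \ge u_{1i*}(x_0)$; symmetrically, for $1 \le \ell \le m_2$, $v^{(k)*}_{2\ell} \le u_{2\ell}^*$ together with upper semicontinuity gives $b_\ell \le u_{2\ell}^*(x_0)$. Now \eqref{mon1} asserts that $F_j$ is non-increasing in $r_i$ for $i \ne j$ and non-decreasing in $s$, so moving $a$ down to $\u_{1*}(x_0)$ (holding the $j$-th coordinate fixed) and $b$ up to $\u_2^*(x_0)$ can only increase $F_j$. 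Chaining gives $F_j(x_0, \u_{1*}(x_0), \u_2^*(x_0), D\phi, D^2\phi) \ge F_j(x_0, a, b, D\phi, D^2\phi) \ge 0$, as desired. The case $1 \le j \le m_2$, with $\phi$ touching $u_{2j}^*$ from above, is handled by the mirror argument --- working with the maximum of $v^{(k)*}_{2j} - \phi$ in place of $\phi - v^{(k)}_{1j*}$ --- and invoking \eqref{mon2} in the symmetric direction.

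The main obstacle is precisely this final chaining step: the limit $(a, b)$ is not $(\u_{1*}(x_0), \u_2^*(x_0))$, but satisfies only $a \ge \u_{1*}(x_0)$ (with equality in coordinate $j$) and $b \le \u_2^*(x_0)$; the balanced monotonicity \eqref{mon1}--\eqref{mon2} is arranged precisely so that these one-sided bounds chain in the correct direction to give the right sign of $F_j$. Any other pattern of monotonicity would leave a sign mismatch, which is why the balanced structure is indispensable here.
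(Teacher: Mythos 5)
Your proposal is correct and follows essentially the same route as the paper's proof: approximate $u_{1j*}(x_0)$ by members of $S$, shift the touching point to nearby points $z_k$, pass to the limit using boundedness of $S$ and continuity of $F_j$, and then use the one-sided bounds $a\ge \u_{1*}(x_0)$ (with $a_j=u_{1j*}(x_0)$), $b\le \u_2^*(x_0)$ together with \eqref{mon1}--\eqref{mon2} to conclude. You merely spell out the standard Perron details (the $-\delta$ boundary gap, the constant shift $M_k$, and the quartic perturbation) that the paper leaves implicit.
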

\begin{proof}
We first show that 
\[
F_j(x,\u(x),Du_{1j}(x),D^2u_{1j}(x))\ge 0,\qquad j=1,\ldots,m_1. 
\]
Suppose $\phi\in C^2$ touches $u_{1j*}$ from below at $x\in \Omega$. By definition there is a sequence $\u^k\in S$, that depends on $j$, such that 
$\lim_{k\to\infty}u_{1j*}^k(x)= u_{1j*}(x)$. Since $\{\u^k(x)\}_k$ is bounded, there is a subsequence such that  
\[
\u_*^{k\:*}(x)\to (r,s),\qquad r_j =u_{1j*}(x).
\] 
By definition of $\u$, we have 
\begin{equation*}
r\ge \u_{1*}(x),\qquad s\le \u_2^{*}(x). 
\end{equation*}
Additionally, $\phi$ touches $u_{1j*}^k$ from below at $x^k$ and $x^k\to x$. Using the continuity of $F$ 
and \eqref{mon1}, we find 
\begin{align*}
0&\le \lim_k F_j(x^k,\u_{*}^{k\:*}(x^k),D\phi(x^k),D^2\phi(x^k))= F_j(x,r,s,D\phi(x),D^2\phi(x))\\
&\le F_j(x,\u_*^{\:\:*}(x),D\phi(x),D^2\phi(x)). 
\end{align*}

To prove 
\[
F_{m_1+j}(x,\u(x),Du_{2j}(x),D^2u_{2j}(x))\le 0,  \qquad 1\le j\le m_2,
\]
we assume $\phi\in C^2$ touches $u^{*}_{2j}$ from above at $x\in \Omega$. In analogy to the first case, we can produce a sequence $\u^k\in S$ 
such that 
\[
\u_*^{k\:*}(x)\to (r,s), \quad\text{where }r\ge \u_{1*}(x),\: s\le \u_2^{*}(x),\: s_j=u_{2j}^*(x), 
\]
and $\phi$ touches $u_{2j}^{k*}$ from above at $x^k$ and $x^k\to x$. 
Again by continuity and \eqref{mon2}, 
\begin{align*}
0&\ge \lim_k F_{m_1+j}(x^k,\u_*^{k\:*}(x^k),D\phi(x^k),D^2\phi(x^k))= F_{m_1+j}(x,(r,s),D\phi(x),D^2\phi(x))\\
&\ge F_{m_1+j}(x,\u_*^{\:\:*}(x),D\phi(x),D^2\phi(x)). 
\end{align*}

\end{proof}

\begin{theorem}[Existence]\label{thm:ex}
If there exist a bounded super-sub solution $\mathbf{z}=(\mathbf{z}_1,\mathbf{z}_2)$ and a bounded sub-super solution $\mathbf{w}=(\mathbf{w}_1,\mathbf{w}_2)$ of \eqref{sys} 
such that 
\[
\begin{array}{l}
\z_1\ge \w_1\text{ in }\Omega\\
\z_2\le \w_2\text{ in }\Omega,
\end{array}
\]
then there exists a solution $\u$ of \eqref{sys} such that 
\[
\begin{array}{l}
\z_1\ge \u_1\ge \w_1\text{ in }\Omega\\
\z_2\le \u_2\le \w_2\text{ in }\Omega.
\end{array}
\]
\end{theorem}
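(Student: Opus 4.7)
The plan is Perron's method. Define the family
\[
S = \{\v = (\v_1,\v_2) : \v \text{ is a super-sub solution of \eqref{sys} with } \w_1\le \v_1\le\z_1,\; \z_2\le\v_2\le\w_2\},
\]
which is non-empty (since $\z\in S$) with uniformly bounded members. Set
\[
u_{1j}(x) = \inf_{\v\in S} v_{1j}(x),\qquad u_{2j}(x) = \sup_{\v\in S} v_{2j}(x),
\]
so that $\w_1\le\u_1\le\z_1$ and $\z_2\le\u_2\le\w_2$ on $\Omega$. Lemma~\ref{infsup} then immediately gives that $\u$ is a super-sub solution. The remaining, and main, task is to show that $\u$ is simultaneously a sub-super solution.

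I would prove this by the standard bump construction, handled separately in the two categories. Suppose the sub-super inequality fails at some $x_0\in\Omega$ for some $u_{1j}^*$, $j\le m_1$: there is $\phi\in C^2$ touching $u_{1j}^*$ from above at $x_0$ with
\[
F_j(x_0,\u^*_{\:\:*}(x_0),D\phi(x_0),D^2\phi(x_0))>0.
\]
Because $\u_1\ge\w_1$ forces $u_{1j}^*\ge w_{1j}^*$, the test $\phi$ also touches $w_{1j}^*$ from above at $x_0$; in the case $u_{1j}^*(x_0)=w_{1j}^*(x_0)$, the sub-super property of $\w$ together with \eqref{mon1} ($F_j$ non-increasing in the off-diagonal $r_k$'s, non-decreasing in $s$) yields
\[
F_j(x_0,\u^*_{\:\:*}(x_0),D\phi,D^2\phi)\le F_j(x_0,\w^*_{\:\:*}(x_0),D\phi,D^2\phi)\le 0,
\]
which contradicts the positivity hypothesis; hence $u_{1j}^*(x_0)>w_{1j}^*(x_0)$. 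By continuity of $F_j$ (combined with \eqref{mon1} to absorb the semicontinuity of $\u^*_{\:\:*}$) one picks $\eta>0$ and a ball $B=B_\rho(x_0)$ so small that $\psi:=\phi-\eta$ satisfies $\psi\ge w_{1j}$ on $B$ and $F_j(x,r,s,D\psi(x),D^2\psi(x))>0$ for all $x\in B$ and all $(r,s)$ in the range attained by the perturbed configuration defined next.

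Define $\tilde{\v}$ by
\[
\tilde{v}_{1j}(x)=\min\bigl(u_{1j}(x),\psi(x)\bigr)\text{ on }B,\qquad \tilde{v}_{1j}=u_{1j}\text{ off }B,
\]
with $\tilde{v}_{1k}=u_{1k}$ for $k\ne j$ and $\tilde{\v}_2=\u_2$. The bounds $\w_1\le\tilde{\v}_1\le\z_1$ and $\z_2\le\tilde{\v}_2\le\w_2$ hold by construction (using $\psi\ge w_{1j}$ on $B$). For the equations $F_k$ with $k\ne j$, $k\le m_1$, and for $F_{m_1+k}$, $k\le m_2$, the passage from $\u$ to $\tilde{\v}$ only lowers the $r_j$-argument, and the balanced monotonicity \eqref{mon1}-\eqref{mon2} in $r_j$ preserves the corresponding super-sub inequalities. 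For $F_j$ itself, $\tilde{v}_{1j}$ is locally the minimum of $u_{1j}$ (which yields $F_j\ge 0$ via $\u$'s super-sub property) and the classical $\psi$ (for which $F_j>0$ on $B$ across the relevant range of $(r,s)$), so it remains a supersolution of the $j$-th equation. Thus $\tilde{\v}\in S$. Picking $x_k\to x_0$ with $u_{1j}(x_k)\to u_{1j}^*(x_0)=\phi(x_0)$, and noting $\psi(x_k)\to\phi(x_0)-\eta<\phi(x_0)$, we get $\tilde{v}_{1j}(x_k)\le\psi(x_k)<u_{1j}(x_k)$ for $k$ large, contradicting the definition of $u_{1j}$ as the infimum over $S$. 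The symmetric failure case for some $u_{2j*}$, $j\le m_2$, is treated identically with $\psi=\phi+\eta$, $\max$ in place of $\min$, and \eqref{mon2} replacing \eqref{mon1}. The principal obstacle is verifying $\tilde{\v}\in S$: this is where the balanced quasi-monotonicity \eqref{mon1}-\eqref{mon2} is used in exactly the form stated, propagating each of the $m$ system inequalities through a single-component perturbation.
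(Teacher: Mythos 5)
Your strategy is the same as the paper's: the same Perron class $S$, Lemma~\ref{infsup} for the super-sub half, and the same contradiction argument for the sub-super half (strict separation from $\w$ via the sub-super property of $\w$ and \eqref{mon1}, a downward bump $\phi-\eta$, taking the minimum with $u_{1j}$, balanced monotonicity to preserve the other $m-1$ inequalities, and a contradiction with the infimum). There is, however, one genuine gap, at the gluing step. You set $\tilde v_{1j}=\min(u_{1j},\psi)$ on $B$ and $=u_{1j}$ off $B$, and justify the supersolution property of the $j$-th equation by saying that $\tilde v_{1j}$ is ``locally the minimum of $u_{1j}$ and the classical $\psi$''. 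That is not true at points of $\partial B$ where $\psi$ lies below $u_{1j}$: there the lower semicontinuous envelope of the glued function can drop down to $\psi$, and a test function touching it from below at such a boundary point is constrained by $\psi$ only from the inside of $B$, so its first and second derivatives need not match those of $\psi$ and nothing forces $F_j\ge 0$ for it. Already for the scalar equation $-u''=0$ in one dimension, gluing $\min(1,-x)$ on $(-1,1)$ with the supersolution $u\equiv 1$ outside fails to be a supersolution at $x=1$. Nothing in your choice of $\eta$ and $B$ rules out $\psi<u_{1j}$ right up to $\partial B$, so the claim $\tilde\v\in S$ is not established as written.

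The missing ingredient is the strict-touching property of $\phi$, which you never invoke: since $\phi>u_{1j}^*$ on $\close{B(x,\delta)}\setminus\{x\}$ for small $\delta$, upper semicontinuity gives $u_{1j}^*(y)\le\phi(y)-\e$ on the annulus $B(x,\delta)\setminus\close{B(x,\delta/2)}$ for suitably small $\delta$ and $\e$ (this is \eqref{halfball} in the paper). Consequently $\min(u_{1j},\phi-\e)=u_{1j}$ near $\partial B(x,\delta)$, every point of $\Omega$ has a neighborhood in which the glued function is either $\u$ itself or the minimum of two supersolutions of the frozen scalar equation, and the extension is a super-sub solution in all of $\Omega$. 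Once this step (and the corresponding constraint that $\eta$ be chosen below the gap $\min_{\close{B(x,\delta)}\setminus B(x,\delta/2)}(\phi-u_{1j}^*)$) is inserted, your argument coincides with the paper's proof, including the use of \eqref{mon1} to pass from the semicontinuous envelopes $\u^*_{\:\:*}$ to $\u^{\:\:*}_*$ when verifying the bump inequality.
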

\begin{proof}
Consider the class 
\[
S = \{\text{super-sub solutions }\v=(\v_1,\v_2)\text{ of \eqref{sys} such that }\z_1\ge \v_1\ge \w_1\text{ and }\z_2\le \v_2\le \w_2\},  
\]
which by hypothesis is non empty. 
Let 
\[
\left\{\begin{array}{l}
u_{1j}(x) = \inf\{v_{1j}(x):\v\in S\},\qquad j=1,\ldots, m_1\\
u_{2j}(x) = \sup\{v_{2j}(x):\v\in S\},\qquad j=1,\ldots, m_2,
\end{array}\right.
\]
and set $\u=(\u_1,\u_2)$. 
Then $\u$ is a super-sub solution according to Lemma~\ref{infsup}. 
Arguing by contradiction, we will prove that $\u$ is also a sub-super solution. 
Assume $\phi\in C^2$ touches $u_{1j}^*$ from above at $x$ and that 
\begin{equation}\label{absurd}
F_j(x,\u^*_{\:\:*}(x),D\phi(x),D^2\phi(x))\ge \theta>0. 
\end{equation}
We will show that this implies 
\begin{equation}\label{strict}
w_{1j}^*(x)<\phi(x)=u_{1j}^*(x). 
\end{equation}
If not, $w_{1j}^*(x)=u_{1j}^*(x)$ from the definition of $S$, and $\phi$ touches also $w_{1j}^*$ from above at $x$. 
Since $\mathbf{w}$ is a sub-super solution, 
\begin{align*}
0\ge F_j(x,\mathbf{w}^*_{\:\:*}(x),D\phi(x),D^2\phi(x)) 
\ge F_j(x,\u^*_{\:\:*}(x),D\phi(x),D^2\phi(x))>0,  
\end{align*}
by definition of $\u$ and \eqref{mon1}, a contradiction. 
Set $\tilde \u = (\tilde\u_1,\u_2)$, where 
\[
\tilde u_{1i} = \left\{\begin{array}{l}
u_{1i}\quad\text{if }i\neq j,\\
\phi\quad\text{if }i=j.
\end{array}\right.
\]
Then 
\begin{equation*}
F_j(x,\tilde \u^*_{\:\:*}(x),D\phi(x),D^2\phi(x)) = \theta>0.
\end{equation*}  
We claim that in a sufficiently small ball $B(x,\delta)$ with radius $\delta$ centered at $x$, 
\begin{equation}\label{loc}
F_j(y,\tilde \u^*_{\:\:*}(y),D\phi(y),D^2\phi(y)) \ge \theta/2,\qquad y\in B(x,\delta). 
\end{equation}
If not, there is a sequence $x^k\to x$ along which 
\[
\lim_{k\to\infty}F_j(x^k,\tilde \u^*_{\:\:*}(x^k),D\phi(x^k),D^2\phi(x^k)) < \theta/2.
\] 
For a subsequence there holds 
$\tilde\u^*_{\:\:*}(x^k)\to (r,s)$. By definition, $\tilde\u_1^*$ is upper semicontinuous and $\tilde \u_{2*}$ is lower semicontinuous, 
so $\tilde\u_1^*(x)\ge r$, $\tilde{\u}_{1j}^*(x)=\phi(x)$, and $\tilde \u_{2*}(x)\le s$. 
Thus 
\begin{align*}
&\lim_kF_j(x^k,\tilde \u^*_{\:\:*}(x^k),D\phi(x^k),D^2\phi(x^k)) = F_j(x,r,s,D\phi(x),D^2\phi(x))\\
&\ge F_j(x,\tilde \u^*_{\:\:*}(x),D\phi(x),D^2\phi(x)) = \theta,  
\end{align*}
by \eqref{mon1}, a contradiction. 

By \eqref{loc} and \eqref{mon1} we have 
\begin{equation}\label{theta/2}
F_j(y,\tilde \u^{\:\:*}_*,D\phi(y),D^2\phi(y))\ge \theta/2,\quad y\in B(x,\delta). 
\end{equation}
Note that we are now considering $\tilde\u^{\:\:*}_*$, not $\tilde\u^*_{\:\:*}$.
After shrinking $\delta$ if necessary, we can find $\e>0$ such that 
\[
w_{1j}^*(y)\le \phi(y)-\e, \quad y\in B(x,\delta). 
\]
This is a consequence of \eqref{strict} and upper semicontinuity. 
Replacing $\delta$ and $\e$ by smaller numbers if needed, we claim that  
\begin{equation}\label{halfball}
u_{1j}^*(y)\le \phi(y)-\e,\quad  y\in B(x,\delta)\setminus\close{B(x,\delta/2)}. 
\end{equation}
Since $\phi$ touches $u_{1j}^*$ from above at $x$, $\phi>u_{1j}^*$ in $\close{B(x,\delta)}\setminus \{x\}$ for small enough $\delta$. 
The claim now follows from upper semicontinuity. 
Redefine the $j$:th component of $\tilde \u_1$ by $\tilde u_{1j}=\phi(y)-\e$. 
If $\e$ is sufficiently small, the continuity of $F_j$ and \eqref{theta/2} tells us that 
\begin{equation}\label{pert}
F_j(y,\tilde \u^{\:\:*}_*,D\phi(y),D^2\phi(y)) \ge \theta/4,\quad y\in B(x,\delta).  
\end{equation}
Consider now the scalar equation 
\begin{equation}\label{scalar}
G(y,v(y),Dv(y),D^2v(y))=0\quad\text{in }B(x,\delta),  
\end{equation}
where 
\[
G(y,t,p,X) = F_j(y,u_{11*}(y),\ldots,u_{1(j-1)*}(y),t,u_{1(j+1)*}(y),\ldots,u_{1m_1*}(y),\u_2^*(y),p,X). 
\]
We will show that $v = \phi-\e$ is a super solution of \eqref{scalar}, in the usual sense of viscosity solutions of scalar 
equations, cf. \cite{MR1118699}. 
Suppose $\psi$ touches $v^*=v=\phi-\e$ from below at $y_0\in B(x,\delta)$. 
Then $D\psi(y_0)=D\phi(y_0)$ and $D^2\psi(y_0)\le D^2\phi(y_0)$.  
Using degenerate ellipticity and \eqref{pert}, we find that 
\[
G(y_0,v(y_0),D\psi(y_0),D^2\psi(y_0))\ge \theta/4. 
\]
Since $(\u_1,\u_2)$ is a super-sub solution of \eqref{sys}, $u_{1j}$ is clearly a super solution of \eqref{scalar}. 
According to the theory of scalar equations, $\gamma = \min(u_{1j},\phi-\e)$ is a super solution of 
\eqref{scalar}. Leaving the scalar equation, we show that $\hat \u=(\hat \u_1,\u_2)$, where 
\[
\hat u_{1i} = \left\{\begin{array}{rl}
u_{1i}&\text{if }i\neq j,\\
\gamma&\text{if }i=j,
\end{array}\right.
\]
is a super-sub solution of \eqref{sys} 
in $B(x,\delta)$. It is already clear that $F_j(y,\hat\u(y),D\hat u_{1j}(y),D^2\hat u_{1j}(y))\ge 0$ in the viscosity sense. 
Assume $\psi$ touches $u_{2i}^*$ from above at $y_0\in B(x,\delta)$, $1\le i\le m_2$. The facts that $(\u_1,\u_2)$ is a 
super-sub solution, $\gamma\le u_{1j}$ and \eqref{mon2} lead to 
\[
F_{m_1+i}(y_0,\hat \u_{1*}(y_0),\u_2^*(y_0),D\psi(y_0),D^2\psi(y_0))\le F_{m_1+i}(y_0,\u_{1*}(y_0),\u_2^*(y_0),D\psi(y_0),D^2\psi(y_0))\le 0. 
\]
From \eqref{halfball} it is seen that $\gamma = u_{1j}$ outside $B(x,\delta/2)$. Thus the extension  
\[
\bar \u = \left\{\begin{array}{ll}
(\hat\u_1,\u_2)&\text{in }B(x,\delta),\\
(\u_1,\u_2)&\text{in }\Omega\setminus \close{B(x,\delta)}
\end{array}\right.
\]
is a super-sub solution of \eqref{sys} in $\Omega$. 
But since $u_{1j}^*(x) = \phi(x)$, there exists a point $x_0\in B(x,\delta)$ where $\phi(x_0)-\e<u_{1j}(x_0)$, 
i.e. $\bar u_{1j}(x_0)<u_{1j}(x_0)$. This contradicts the minimality of $u_{1j}$ and proves that \eqref{absurd} 
is false. In a completely analagous way, it can be shown that if $\phi$ touches $u_{2i*}$ from below, then  
\[
F_{m_1+i}(x,\u^*_{\:\:*}(x),D\phi(x),D^2\phi(x))\ge 0. 
\] 
The proof is thereby complete. 
\end{proof}

\section{Comparison and Uniqueness}\label{last}

In this section we study \eqref{sys} with Dirichlet data on $\partial\Om$. That is 
\begin{equation}\label{sysDP}
\left\{
\begin{aligned}
&F_j(x,\u(x),Du_j(x),D^2u_j(x))=0\qquad\text{in }\Om,\qquad 1\le j\le m,\\
&\u=0\qquad\text{on }\partial\Om.
\end{aligned}\right.
\end{equation}
As before we assume \eqref{mon1}-\eqref{mon2}. 
It is noteworthy that uniqueness can be proved without reference to comparison here. 
In addition to degenerate ellipticity, the following two assumptions (c.f. \cite{MR1116855}) are enough to prove uniqueness for \eqref{sysDP}: 
\begin{itemize}
\item[i)] There is a positive number $\lambda>0$ such that if $\xi,\eta\in\R^m$, $\xi\neq\eta$ and 
$\xi_j-\eta_j=\max_{1\le k\le m}|\xi_k-\eta_k|$, then 
\[
F_j(x,\xi,p,X)-F_j(x,\eta,p,X)\ge \lambda(\xi_j-\eta_j), \qquad\text{for all }(x,p,X). 
\]
\item[ii)] There is a continuous function $\omega:[0,\infty)\to[0,\infty)$ with $\omega(0)=0$ 
such that if $X,Y\in\S^n$, $\alpha>0$ and 
\[
-3\alpha\begin{pmatrix}I&0\\0&I\end{pmatrix}
\le \begin{pmatrix}X&0\\0&Y\end{pmatrix}
\le 3\alpha\begin{pmatrix}I&-I\\-I&I\end{pmatrix},
\] 
then 
\[
F_j(y,\xi,\alpha(x-y),-Y)-F_j(x,\xi,\alpha(x-y),X)\le \omega(\alpha|x-y|^2+1/\alpha),
\]
for all $1\le j\le m$, $x,y\in\Om$ and $\xi\in\R^m$. 
\end{itemize}
The proof (cf.\cite{MR1116855} Theorem 4.1) does not take into account the dependence of $F_j$ on $\xi_i$ for 
$i\neq j$. Thus 
\[
\emph{a solution to \eqref{sysDP} is unique if }\text{i)}\emph{ and }\text{ii)}\emph{ hold.}
\]

In our setting we have a comparison principle for sub-super and super-sub solutions. 
The proof requires a somewhat stronger assumption than i). 
\begin{itemize}
\item[i')]
Suppose $(r,s), (\rho,\sigma)\in \R^{m_1}\times\R^{m_2}$ and 
\[
\max\left\{\max_{1\le j\le m_1}(r_{j}-\rho_{j}), \max_{1\le j\le m_2}(\sigma_{j}-s_j)\right\}=\theta>0. 
\]
Then there is a $\lambda>0$ such that 
\[
\left\{\begin{array}{ll}
F_j(x,r,s,p,X)-F_j(x,\rho,\sigma,p,X)\ge \lambda(r_{j}-\rho_{j})&\text{if }\theta = r_{j}-\rho_{j},\\
F_{m_1+j}(x,\rho,\sigma,p,X)-F_{m_1+j}(x,r,s,p,X)\ge \lambda(\sigma_{j}-s_{j})&\text{if }\theta = \sigma_{j}-s_{j}. 
\end{array}\right.
\] 
\end{itemize}
\begin{theorem}\label{comp}
Suppose i') and ii) hold. Then if $(\u_1,\u_2)$ is a sub-super solution and $(\v_1,\v_2)$ is a super-sub solution such that 
\[
\u_1^*\le\v_{1*} \text{  on }\partial\Om,\qquad\v_2^*\le\u_{2*} \text{  on }\partial\Om,
\]
then 
\[
\u_1^*\le\v_{1*} \text{  in }\Om,\qquad\v_2^*\le\u_{2*} \text{  in }\Om.
\]
\end{theorem}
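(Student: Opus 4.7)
The plan is a doubling-of-variables argument tailored to the balanced structure. Argue by contradiction and introduce the upper semicontinuous function
\[
M(x,y):=\max\Bigl\{\max_{1\le i\le m_1}\bigl(u_{1i}^*(x)-v_{1i*}(y)\bigr),\;\max_{1\le i\le m_2}\bigl(v_{2i}^*(y)-u_{2i*}(x)\bigr)\Bigr\}
\]
on $\close{\Om}\times\close{\Om}$, and suppose $\theta:=\sup_{\close{\Om}}M(x,x)>0$. The boundary hypotheses force $M(x,x)\le 0$ on $\partial\Om$, so $\theta$ is attained at some interior point $\hat x\in\Om$.

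For $\alpha>0$ large, I maximize $\Phi_\alpha(x,y):=M(x,y)-\tfrac{\alpha}{2}|x-y|^2$ over $\close{\Om}\times\close{\Om}$ at $(x_\alpha,y_\alpha)$. Standard estimates yield $(x_\alpha,y_\alpha)\to(\hat x,\hat x)$ with $\hat x$ interior, $\alpha|x_\alpha-y_\alpha|^2\to 0$, and $M(x_\alpha,y_\alpha)\to\theta$. A pigeonhole argument lets me pass to a subsequence along which the argmax defining $M(x_\alpha,y_\alpha)$ is a fixed index; by symmetry between the two blocks, assume it is $M(x_\alpha,y_\alpha)=u_{1j}^*(x_\alpha)-v_{1j*}(y_\alpha)$ for some $j\in\{1,\ldots,m_1\}$. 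Since $u_{1j}^*(x)-v_{1j*}(y)\le M(x,y)$ everywhere, with equality at $(x_\alpha,y_\alpha)$, the function $u_{1j}^*(x)-v_{1j*}(y)-\tfrac{\alpha}{2}|x-y|^2$ also attains its maximum at $(x_\alpha,y_\alpha)$. The Crandall--Ishii lemma then supplies $X_\alpha,Y_\alpha\in\S^n$ satisfying the matrix inequality of assumption ii) with $p_\alpha:=\alpha(x_\alpha-y_\alpha)$, and the defining properties of sub-super and super-sub solutions at index $j\le m_1$ give
\[
F_j\bigl(x_\alpha,\u^*_{\:\:*}(x_\alpha),p_\alpha,X_\alpha\bigr)\le 0,\qquad F_j\bigl(y_\alpha,\v_*^{\:\:*}(y_\alpha),p_\alpha,-Y_\alpha\bigr)\ge 0.
\]

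To finish, subtract these and insert $F_j(x_\alpha,\v_*^{\:\:*}(y_\alpha),p_\alpha,X_\alpha)$, splitting the result into two pieces. Assumption ii) bounds the first by $\omega(\alpha|x_\alpha-y_\alpha|^2+1/\alpha)\to 0$. For the second, with $(r,s):=\u^*_{\:\:*}(x_\alpha)$ and $(\rho,\sigma):=\v_*^{\:\:*}(y_\alpha)$ one has
\[
\max\bigl\{\max_i(r_i-\rho_i),\max_i(\sigma_i-s_i)\bigr\}=M(x_\alpha,y_\alpha)>0,
\]
attained at index $j$ by construction, so i') applies and yields
\[
F_j\bigl(x_\alpha,\v_*^{\:\:*}(y_\alpha),p_\alpha,X_\alpha\bigr)-F_j\bigl(x_\alpha,\u^*_{\:\:*}(x_\alpha),p_\alpha,X_\alpha\bigr)\le-\lambda\bigl(u_{1j}^*(x_\alpha)-v_{1j*}(y_\alpha)\bigr).
\]
Combining and sending $\alpha\to\infty$ gives $0\le-\lambda\theta$, the desired contradiction. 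The principal obstacle is making the argmax index in i') match the index used in the doubling; doubling the full envelope $M$ rather than a single component guarantees this match.
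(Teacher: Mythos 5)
Your proposal is correct and takes essentially the same route as the paper, which argues by contradiction from \eqref{theta2} and doubles variables with the index-dependent functions $\psi_1,\psi_2$ following Ishii--Koike (Theorem 4.7) --- exactly what your envelope $M$ packages, with i') and ii) then producing the contradiction as you describe. The only step you gloss is passing from the touching-test-function definitions to (closures of) semijets at $(x_\alpha,y_\alpha)$, where the off-diagonal components are only semicontinuous and the resulting inequalities must be absorbed via \eqref{mon1}--\eqref{mon2}; this is standard and goes through here.
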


We shall not give the proof of Theorem \ref{comp} since it closely follows that of Theorem 4.7 in \cite{MR1116855}. 
However, a few differences should be pointed out. The proof of \cite{MR1116855} is given for a subsolution $\u$ and a supersolution 
$\v$. The argument is by contradiction, assuming 
\begin{equation}\label{theta}
\max_{1\le k\le m,\;x\in \Om}(u_k^*(x)-v_{k*}(x))=\theta>0. 
\end{equation} 
Then by studying the local maxima of 
\[
\psi(k,x,y) = u_{k}^*(x)-v_{k*}(y)-\frac{\alpha}{2}|x-y|^2,\quad 1\le k\le m,\;x,y\in \Om,
\]
as $\alpha\to\infty$, the authors are able to derive a contradiction to \eqref{theta}. 
For the proof of Theorem \ref{comp} we assume that 
\begin{equation}\label{theta2}
\max\left\{\max_{1\le k\le m_1,\;x\in \Om}(u_{1k}^*(x)-v_{1k*}(x)),\;\max_{1\le k\le m_2,\;x\in \Om}(v_{2k}^*(x)-u_{2k*}(x))\right\}=\theta>0, 
\end{equation} 
and define $\psi(j,k_j,x,y)=\psi_j(k_j,x,y)$, for $j\in \{1,2\},\;1\le k_j\le m_j,\;x,y\in \Om$, where 
\[
\psi_1(k,x,y) = u_{1k}^*(x)-v_{1k*}(y)-\frac{\alpha}{2}|x-y|^2,\quad 1\le k\le m_1,\; x,y\in \Om,
\]
\[
\psi_2(k,x,y) = v_{2k}^*(x)-u_{2k*}(y)-\frac{\alpha}{2}|x-y|^2,\quad 1\le k\le m_2,\; x,y\in \Om.
\]
Then the proof of Theorem \ref{comp} is analogous to that of Theorem 4.7 in \cite{MR1116855}.

We conclude by elaborating on example \eqref{competitive} introduced in Section \ref{sec:prob}.  
\begin{equation*}
\left\{\begin{aligned}
&-\Delta u+\lambda u+\alpha\max(u,v)-f=0\quad \text{in }\Om,\\
& -\Delta v+\lambda v+\beta\max(u,v)-g=0\quad \text{in }\Om, \\
&u=v=0\quad\text{on }\partial\Om, 
\end{aligned}\right.
\end{equation*}
where $\alpha,\beta,\lambda$ are all positive, $f$ and $g$ are non-negative smooth functions, and $\Omega$ is a bounded smooth domain in $\R^n$. 
We will construct a super-sub solution to \eqref{competitive}. 
Let 
$\overline u$ be the solution to 
\begin{equation*}
\left\{\begin{aligned}
&-\Delta u+\lambda u-f=0\quad \text{in }\Om,\\
&u=0\quad\text{on }\partial\Om, 
\end{aligned}\right.
\end{equation*}
then $\overline u\ge 0$ by the maximum principle. 
Furthermore, let $\underline v$ solve 
\begin{equation*}
\left\{\begin{aligned}
&-\Delta v+\lambda v+\beta\max(\overline{u},v)-g=0\quad \text{in }\Om,\\
&v=0\quad\text{on }\partial\Om. 
\end{aligned}\right.
\end{equation*}
Then one can check that $(\overline u,\underline v)$ is a super-sub solution of \eqref{competitive}. 
Similarly we can produce a sub-super solution 
$(\underline u,\overline v)$ by letting $\overline v$ be the solution to 
\begin{equation*}
\left\{\begin{aligned}
&-\Delta v+\lambda v-g=0\quad \text{in }\Om,\\
&v=0\quad\text{on }\partial\Om.  
\end{aligned}\right.
\end{equation*}
As in the case of  $\overline u$, $\overline v\ge0$. 
Then choosing $\underline u$ to be the solution to 
\begin{equation*}
\left\{\begin{aligned}
&-\Delta u+\lambda u+\alpha\max(u,\overline{v})-f=0\quad \text{in }\Om,\\
&u=0\quad\text{on }\partial\Om,  
\end{aligned}\right.
\end{equation*}
we see that  $(\underline u,\overline v)$ is a sub-super solution. 
To see that $\underline u\le \overline u$ and $\underline v\le \overline v$ in $\Om$, the maximum principle is applied once more. 
Indeed, 
\[
-\Delta (\overline u-\underline u)+\lambda(\overline u-\underline u) = -\alpha\max(\overline u,\underline u)\le 0,
\] 
and $\overline u-\underline u=0$ on $\partial\Om$, so $\overline u\ge \underline u$ in $\Om$. 
Similarly we find that $\overline v\ge \underline v$.   
Now Theorem \ref{thm:ex} can be applied to infer the existence of a solution to \eqref{competitive}, 
which is unique by Theorem \ref{comp}.

\bibliographystyle{amsplain}
\bibliography{references}

\end{document}